\documentclass[12pt]{amsart}
\usepackage{amsmath}
\usepackage{amssymb,amscd}
\usepackage[T1]{fontenc}
\usepackage[latin1]{inputenc}
\usepackage[colorlinks=true, urlcolor=blue,bookmarks=true,bookmarksopen=true, citecolor=blue,hypertex]{hyperref}

\addtolength{\textwidth}{2.4cm} \addtolength{\hoffset}{-1.2cm}
\addtolength{\textheight}{1.2cm} \addtolength{\topmargin}{-0.6cm}

\numberwithin{equation}{section}

\newtheorem{defn}{Definition}[section]
\newtheorem{theorem}{Theorem}[section]

\newtheorem{corollary}[theorem]{Corollary}
\newtheorem{example}[theorem]{Example}

\newtheorem{lemma}[theorem]{Lemma}
\newtheorem{proposition}[theorem]{Proposition}
\newtheorem{remark}[theorem]{Remark}

\def \begineq{\begin{equation}}
\def \endeq{\end{equation}}

\def \bb{\mathbb}

\def \RR{{\bb{R}}}

\def \ZZ{{\bb{Z}}}

\def \({\left(}
\def \){\right)}
\def \<{\langle}
\def \>{\rangle}

\begin{document}
\title[A classification result  and Contact Structures in oriented compact cyclic 3-orbifolds]{A classification result and   Contact Structures in oriented cyclic 3-orbifolds}
\author[Saibal Ganguli]{Saibal Ganguli}
\address{ Harish-Chandra Research Institute
Chhatnag Road, Jhusi
Allahabad 211 019
India }
\email{saibalgan@gmail.com}

\subjclass[2010]{Primary 57R17 ,57R18, 57R55 Secondary 53D10 ,}
\keywords{ Contact structures, cyclic orbifolds, oriented,overtwisted structures ,Lutz twist}
\abstract
{ We  prove every oriented compact cyclic $3$-orbifold has a contact structure. There is another proof in the web by Daniel Herr in his uploaded thesis which depends on 
open book decompositions, ours is 
independent
 of that. We define overtwisted contact structures,  tight contact structures and Lutz twist on oriented compact  cyclic 3-orbifolds. We show every contact
structure in  an oriented compact cyclic $3$-orbifold contactified  by our method is homotopic to an overtwisted structure with the  overtwisted disc intersecting the 
singular locus of the orbifold. We
pose Eliashberg's like characterization of overtwisted contact structures of cyclic $3$-orbifolds as an open problem. In course of proving the above results we prove a classification
result for compact oriented cyclic-3 orbifolds which has not been seen by us in literature before.
}
\endabstract
\maketitle
\section{{\bf Introduction }}
 Contact structures are generally
known on manifolds. They are maximally non-integrable co-dimension one distributions.
 Contact structures can be also be defined on orbifolds by going to the level
of charts and keeping the distributions invariant under local group actions. Though
lot of work has been carried out on contact manifolds little is known about contact
 orbifolds. In this paper  we provide contact structures to compact oriented cyclic-3orbifolds.
 
       Cyclic orbifolds are orbifolds where the local group acting on an  orbifold chart are  cyclic and action on each chart in some  atlas is orientation preserving. The compact  
 $3$-dimensional version of these 
 orbifolds are topologically
 manifolds and we call them oriented if the manifold is orientable with an orientation. We prove Martinet-like theorem on these orbifolds. 
 
 The main idea of the proof
 is that since the singular locus of the orbifold is a link  we can get a rotationally invariant  contact structure transverse to the link by Eliashberg's extension theorem on the
 smooth 
 $3$-manifold (the unique
 smooth structure(up to diffeomorphism) it gets by virtue of it topologically being a $3$-manifold). Then we prove a classification result for these orbifolds. We prove any 
 such cyclic orbifold will
 be a quotient of  cyclic local  group actions on the $3-$ manifold which are global cyclic group actions along the neighborhoods of the singular link components.
   By virtue of this classification the
 rotationally-symmetric  manifold contact structure near the link induces a contact structure on the orbifold  structure. The above  classification has not been seen by
 us in literature before.
 
     Since the underlying topological space is a manifold  and since the contact structure on the orbifold structure induces a contact structure on the manifold structure 
     (in this case) 
     we can generalize
     overtwisted structures, tight structures and Lutz-twists to these orbifolds. This opens many questions solved in contact manifold theory to these orbifolds one of which is whether Eliashberg's
     like characterization of overtwisted contact structures is possible for compact  cyclic
     $3$-orbifolds.

     \section{Acknowledgement}
 I thank Professor Mainak Poddar for introducing this topic. I thank Professor Yakov Eliashberg, Professor Francisco Presas and Roger Casals(for the contact structure extension argument)
 for answering my mails. I thank
 Daniel Herr for uploading his thesis \cite{[DH]}  and giving me the motivation to carry out this work. I thank Harish Chandra Research Institute and institute of mathematical sciences
 Chennai for my post doctoral grant under which
 the project has been carried out.
     
     \section{{\bf Contact Structure}}
 We give a short exposition of contact structures and contact topology for a detailed exposition the reader may consult \cite{[HG]}.
 \begin{defn}
  A contact structure $\zeta$ in a $2n+1$ dimensional manifold is a $2n$ dimensional maximally non-integrable plane distribution. In dimension $3$ it means around 
  each point
  $p$ if we take 
  two  locally defined  linearly independent vector fields $X_p$ and $Y_p$  lying in $\zeta$ then their lie bracket  does not lie in $\zeta$. We call the  manifold $M$ %
   with a contact structure $\zeta$ a contact manifold $( M,\zeta)$. 
 \end{defn}
\begin{remark}
   When the distribution is co-orientable we can define a global form $\alpha$ such that $\alpha (d\alpha)^{n}$ is a no where zero  top degree form of the $2n+1$ dimensional 
   manifold and  kernel of
   $\alpha$ is $\zeta$. It can be argued from this $3$-manifolds admitting contact structure are orientable.
 \end{remark}

 \begin{defn}
  A knot in a 3-manifold $M$ is an embedding of a circle in the manifold.
 \end{defn}
 
 \begin{defn}
  A  knot $K$ in a contact $3$-manifold $(M,\zeta)$ is called Legendrian if it is tangent to the contact structure. It is called transverse if it is transverse to  the contact structure.
 \end{defn}

\begin{defn}
  A framing of a knot $K$ in a  oriented $3$-manifold is a framing of its normal bundle.
 \end{defn}

 \begin{defn}
  A link in a 3-manifold $M$ is a one dimensional sub-manifold with connected components as knots.
 \end{defn}

 \begin{defn}
 A   contact framing of a Legendrian knot $K$  is one in which one of the frame vector is tangent to the contact structure.
\end{defn}

\begin{defn}
 A   surface framing of a Legendrian knot $K$ with respect  to a surface $\Sigma$ whose boundary is $K$ is a framing where one of the frame vectors is tangent to the surface.
\end{defn}

\begin{defn}
  A characteristic foliation of a surface  $\Sigma$ in a contact $3$-manifold $(M,\zeta)$ is a singular foliation formed by the intersection of the tangent space of $\Sigma$ with $\zeta$.
\end{defn}

\begin{defn}
 An overtwisted disc $D$ is a disc in a contact $3$-manifold $(M,\zeta)$  with  Legendrian boundary $K$  such that the contact framing of $K$ does not twist with respect to 
 surface framing and the characteristic foliation has exactly one interior singular point.
\end{defn}

\begin{defn}
A  contact structure which has  a overtwisted disc is called a overtwisted contact structure. It is called tight if it is not overtwisted.
\end{defn}

\begin{example}\label{over}
 We give an example of an overtwisted contact structure $\zeta_{ot}$ in $\RR^{3}$. It can be found also in \cite{[HG]} section 4.5. We define the structure by the following equation in standard cylindrical coordinates
 \begin{equation}
  cos(r)dz +rsin(r)d\phi =0
 \end{equation}
 
Take the disc D$= \{ r| r \leq \pi ,z=0 \}$. The characteristic foliation is singular at $ 0$ and along the boundary. If we perturb this disc near the boundary , the characteristic 
foliation looses its
singularity at the boundary and the boundary  becomes a leaf of the foliation making it an  overtwisted disc  of  the structure $\zeta_{ot}$ and thus  $\zeta_{ot}$ is an overtwisted structure. 
We call the 
perturbed  or unperturbed disc $D$, $\Delta$ the standard overtwisted disc and $\zeta_{ot}$  standard overtwisted structure.
\end{example}
\subsection{{\bf Lutz twist}}
We consider  a oriented  $3$- manifold with a contact structure $\zeta$ and a transverse oriented knot then it is known in a tubular neighborhood of the knot   $\zeta$ is  the kernel
of the form $dz+r^{2}d\theta$ where $z$ is along the knot direction and $r$  and $\theta$ are  coordinates along meridian direction of the tubular neighborhood  which coincides 
with the normal bundle of the knot in this case.
 We say $\zeta^{'}$ is obtained from $\zeta$ with a Lutz twist if on $S^{1}\times D^{2}$ the new contact structure $\zeta $ is defined by
\begin{equation}
 \zeta^{'}=ker({h_1(r)dz+h_2(r)d\phi})
\end{equation}
and $\zeta{'}$ coincides with with $\zeta$ outside the solid torus. Conditions are
\begin{enumerate}
\item  $h_1(r)=-1,h_2(r)=-r^2 $near $r=0$.

\item $h_1(r)=1$ and $h_2(r)=r^2$ near $ r=1$.

\item  $(h_1(r),h_2(r))$ is never parallel to $({h_1}^{'},{h_2}^{'})$.
\end{enumerate}
\subsection{ {\bf Full Lutz twist}}
Changing condition 1 and   adding   some  more conditions we  get a full Lutz twist.
\begin{equation}
  h_1(r)=1
 \end{equation}
 \begin{equation}
 h_2(r)=r^2 
 \end{equation}
  for $ r \in [0,\epsilon] \cup [1-\epsilon,1] $.
 By Lemma 4.5.3 in \cite{[HG]},
 \begin{lemma}
  A full Lutz twist does not change the homotopy class of $\zeta$ as
 a $2$-plane field.
 \end{lemma}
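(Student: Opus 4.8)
The plan is to compare the original structure $\zeta=\ker(dz+r^{2}d\phi)$ with its full Lutz twist $\zeta'=\ker(h_{1}(r)\,dz+h_{2}(r)\,d\phi)$ only on the solid torus $S^{1}\times D^{2}$ on which the twist lives. By the defining conditions of the full twist, $h_{1}=1$ and $h_{2}=r^{2}$ for $r$ near $0$ and near $1$, so $\zeta$ and $\zeta'$ already agree near the core, near the boundary torus, and everywhere outside; hence it suffices to produce a homotopy of $2$-plane fields on $S^{1}\times D^{2}$ that is fixed on the boundary torus. Since both structures are coorientable I would record each one by its positive unit normal, a Gauss map to $S^{2}$ taken with respect to the global frame $\partial_{x},\partial_{y},\partial_{z}$.

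First I would write the normal out explicitly: dualizing the defining form gives $N=\left(-\tfrac{h_{2}}{r}\sin\phi,\ \tfrac{h_{2}}{r}\cos\phi,\ h_{1}\right)$ and $f=N/|N|$, which is continuous across the core because $h_{2}/r=r\to0$ there. The key structural point is that $N$ has no $z$-dependence, so the Gauss maps of both $\zeta$ and $\zeta'$ factor through the projection $pr\colon S^{1}\times D^{2}\to D^{2}$ as $f=\bar f\circ pr$ with $\bar f\colon D^{2}\to S^{2}$. A homotopy of the disc maps $\bar f_{\zeta'}\simeq\bar f_{\zeta}$ rel $\partial D^{2}$ therefore pulls back under $pr$ to a homotopy of plane fields fixing the boundary torus, which is exactly what is needed. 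Everything is thus reduced to the rel-boundary comparison of two maps $D^{2}\to S^{2}$ that agree on $\partial D^{2}$.

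Next I would use the fact that two maps $D^{2}\to S^{2}$ extending the same boundary map are homotopic rel $\partial D^{2}$ if and only if the sphere map $F\colon S^{2}\to S^{2}$ obtained by gluing the two discs along their common boundary (the second carrying the opposite orientation) has $\deg F=0$; this degree is the primary difference class in $\pi_{2}(S^{2})=\ZZ$. Writing $(h_{1},h_{2})=\rho(\cos\Theta,\sin\Theta)$, the polar angle $\beta(r)$ of $f$ obeys $\cos\beta\propto h_{1}$, so as $\Theta$ sweeps a full $2\pi$ the image descends from the north pole to the south pole and returns, while the longitude equals $\phi\pm\tfrac{\pi}{2}$ according to the sign of $h_{2}$. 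Evaluating $\deg F$ at a regular value in the open southern hemisphere, the map for $\zeta$ contributes nothing and the map for $\zeta'$ contributes exactly two preimages, one on the branch where $\beta$ increases and $h_{2}>0$ and one where $\beta$ decreases and $h_{2}<0$; these carry opposite Jacobian signs and cancel, so $\deg F=0$ and the two disc maps are homotopic rel boundary.

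The step that deserves the most care, and the real content of the lemma, is this last signed count: it must be carried out with honest attention to orientations, and it is exactly what separates a full twist from a half twist. For a half twist the profile $\beta$ runs only from the north pole to the south pole, so the canceling partner is absent and one finds $\deg F=\pm1$, i.e. the homotopy class genuinely changes; it is the return trip supplied by the full $2\pi$ rotation that produces the second preimage and forces the degree to vanish. I would finally remark that the factoring through the two-dimensional $D^{2}$ is what disposes of any further three-dimensional, Hopf-type obstruction in $\pi_{3}(S^{2})$ that one would otherwise have to compute: once the disc maps are homotopic rel boundary, pulling back gives the desired global homotopy $\zeta\simeq\zeta'$ of $2$-plane fields, completing the proof.
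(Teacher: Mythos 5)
Your proof is correct, but note that the paper itself does not prove this lemma at all: it simply invokes Lemma 4.5.3 of \cite{[HG]}, and your argument is in essence the standard one from that reference. You record the cooriented plane fields by Gauss maps into $S^2$, observe that on $S^1\times D^2$ these maps are $z$-independent and therefore factor through $D^2$, and then show that the two disc maps, which agree near $\partial D^2$, differ by the trivial element of $\pi_2(S^2)$ because a regular value in the open southern hemisphere is covered with cancelling signs; pulling the disc homotopy back through the projection kills any worry about a $\pi_3(S^2)$ (Hopf-type) obstruction. Two small caveats. First, your claim of ``exactly two preimages'' tacitly assumes the polar angle $\beta(r)$ is monotone on each of the two branches where $h_1<0$; the non-parallelism condition forces the angle of $(h_1(r),h_2(r))$ to be strictly monotone, but $\beta$ is built from $h_2/r$ rather than $h_2$, so monotonicity of $\beta$ itself is not automatic. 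This is harmless: on each branch the net signed count of crossings of the level $\beta_0$ is determined by the endpoint values alone, giving $+1$ on one branch and $-1$ on the other, so $\deg F=0$ regardless of how many times the level is crossed. Second, which sign of $h_2$ accompanies the descending (respectively ascending) branch depends on the sense in which $(h_1,h_2)$ winds around the origin, which is fixed by the contact condition; your pairing corresponds to one choice of that convention, and the cancellation is unaffected by it. With those clarifications your argument is a complete and correct proof of the statement the paper delegates to \cite{[HG]}, and it also correctly isolates why a half twist, lacking the return branch, changes the class by $\pm1$.
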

It can be seen $h_2(r)$ is zero in two places (for details fig($4.11$) \cite{[HG]}).Taking $r_0$   the smaller radius where $h_2(r)=0$ consider the following embedding
 $ \phi:$ ${D^{2}}_{r_0} \rightarrow S^{1} \times D^{2} $.

 $(r,\phi) \rightarrow (z(r),r,\phi)$
 where $z(r)$ is a smooth function with $z(r_0)=0$ and $z(r) > 0$ for $0 \leq r < r_0 $ and $z^{'}(r)=0$ for only $r=0$ . It can be seen $\phi(\delta(D^{2}_{r_0}))$ is Legendrian and its 
 contact framing and surface framing
 do not twist with respect to each other and with some more conditions and  work we can show $ \phi(D^{2}_{r_0})$ is an overtwisted disk.(for more details section 4.5 \cite{[HG]})
\begin{proposition}
 We thus conclude that any contact structure is homotopic to an overtwisted contact structure  got by a full Lutz twist.
\end{proposition}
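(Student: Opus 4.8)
The plan is to combine the two facts already assembled: that a full Lutz twist produces an overtwisted disc, and that, by the Lemma above, it leaves the homotopy class of the plane field unchanged; the twisted structure is then an overtwisted contact structure homotopic to the original. First I would fix a transverse knot $K$ in the contact $3$-manifold $(M,\zeta)$, which always exists since any knot can be perturbed to be transverse to $\zeta$. Passing to the standard tubular neighborhood $S^1\times D^2$ recalled above, in which $\zeta=\ker(dz+r^2\,d\theta)$, I would perform the full Lutz twist, replacing $\zeta$ on this solid torus by $\zeta'=\ker(h_1(r)\,dz+h_2(r)\,d\phi)$ and leaving it unchanged elsewhere. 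The non-parallelism of $(h_1,h_2)$ with its derivative guarantees that $\zeta'$ is again a contact structure, and the agreement of $(h_1,h_2)$ with $(1,r^2)$ near $r=1$ ensures that $\zeta'$ glues smoothly to $\zeta$ over the rest of $M$.

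The heart of the argument is to exhibit an overtwisted disc inside the twisted solid torus. Taking the smaller radius $r_0$ at which $h_2$ vanishes and the embedding defined above, I would consider the disc $\phi(D^2_{r_0})$, the graph of the profile $z(r)$ over the subdisc of radius $r_0$. Its boundary is the circle over $r=r_0$ with tangent $\partial_\phi$, and since $h_2(r_0)=0$ the defining form $h_1(r_0)\,dz$ annihilates $\partial_\phi$, so the boundary is Legendrian. The characteristic foliation is singular exactly where both $h_2(r)=0$ and $h_1(r)\,z'(r)=0$; in the range $0\le r\le r_0$ this happens only at the cone point $r=0$, because $z'(r)$ vanishes only there while $h_2$ is otherwise nonzero, giving a single interior singularity. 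The one point requiring genuine work is to show that the contact framing induced by $\zeta'$ along the boundary does not twist relative to the surface framing of the disc; this is the Thurston--Bennequin-type computation indicated in section 4.5 of \cite{[HG]} and is the main obstacle, since it depends on how far $(h_1,h_2)$ has turned by the time $r$ reaches $r_0$.

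Once $\phi(D^2_{r_0})$ is recognized as an overtwisted disc, $\zeta'$ is by definition an overtwisted contact structure. It then remains only to invoke the Lemma above: a full Lutz twist does not change the homotopy class of $\zeta$ as a $2$-plane field, so $\zeta'$ is homotopic to $\zeta$. Combining the two conclusions, $\zeta$ is homotopic to the overtwisted contact structure $\zeta'$ obtained by the full Lutz twist, which is exactly the assertion of the proposition.
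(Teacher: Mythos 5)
Your proposal is correct and follows essentially the same route as the paper: perform a full Lutz twist along a transverse knot, exhibit the disc $\phi(D^2_{r_0})$ over the smaller zero of $h_2$ as an overtwisted disc (deferring the framing computation to section 4.5 of \cite{[HG]}, exactly as the paper does), and invoke the lemma that a full Lutz twist preserves the homotopy class of the plane field. Your write-up merely fills in details the paper leaves implicit, such as the existence of a transverse knot and the verification that the boundary is Legendrian with a single interior singularity of the characteristic foliation.
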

\section{\bf{ Orbifolds}}
We give a brief introduction to orbifolds following the treatment in \cite{[ALR]}
\begin{defn}
Let X be a topological space, and fix n$ \geq 0$.
\begin{enumerate}\label{orb}
\item  An n-dimensional orbifold chart on X is given by a connected open subset
$\tilde{U} \subset R^{n}$, a finite group G of smooth diffeomorphisims of  $\tilde{U}$, and a map $\phi:
\tilde{U}$  $\rightarrow $ X so that $\phi$  is  G-invariant and induces a homeomorphism of $\tilde{U}/ G$onto
an open subset $ U \subset X$.

\item An embedding $ \lambda (\tilde{U},G,\phi) \rightarrow  $( $\tilde{V},H,\psi) $between two such charts is a
smooth embedding $\lambda :\tilde{U} \rightarrow \tilde{V}$ with $ \psi \lambda=\phi $.

\item An orbifold atlas on X is a family   of such charts $\mathcal{U}=(\tilde{U},G,\phi)$, which
cover X and satisfy the compatibility condition: given any two charts $(\tilde{U},G,\phi)$ for $U=\phi(\tilde{U}) \subset X$ and
 $(\tilde{V},H,\psi)$ for $ V=\psi(\tilde{V} )\subset X$   and a point  $x \in$ $U \cap V$ there
exists an open neighborhood $W \subset U \cap V$ of x and  a chart $(\tilde{W} ,K,\mu) $ for W such that there are embeddings
$\lambda_1: (\tilde{W} ,K,\mu)\rightarrow (\tilde{U} ,G,\phi) $ and $\lambda_2 :(\tilde{W},K,\mu)\rightarrow (\tilde{V} ,H,\phi)$. Every embedding of charts induces
injective homomorphisms $\lambda:K \rightarrow G$.
\item An atlas $\mathcal{U}$ is said to refine another atlas $\Upsilon$ if for every chart in $\mathcal{U}$ there
exists an embedding into some chart of  $\Upsilon$. Two orbifold atlases are said to be
equivalent if they have a common refinement.
\end{enumerate}
\end{defn}

\begin{defn}
An effective orbifold X of dimension n is a paracompact Hausdorff
space $\bf{X}$  equipped with an equivalence class $[\mathcal{U}]$ of n-dimensional orbifold
atlases such that each local group acts effectively in local charts.
\end{defn}
{\bf Throughout this paper we  always assume that our orbifolds are
effective}.
\begin{defn}
 If the finite group actions on all the charts are free, then $\bf{X}$ is locally
Euclidean, hence a manifold. Points in an effective  orbifold where there are non-trivial stabilizers
 form the {\bf singular locus}.
\end{defn}

\begin{defn}
Let $ \bf{X}$ $= (X,\mathcal{U})$  and $ \bf{Y}$ $=(Y,\Upsilon) $ be orbifolds. A map $ f:X \rightarrow Y $ is said to be smooth if for any point  $x$ $\in X$ there are charts
$(\tilde{U},G,\phi)$ around $x$ and $(\tilde{V},G,\psi)$, around $f(x)$ with the property that f maps $U=\phi(\tilde{U})$ into $V=\psi(\tilde{U})$and can be lifted to a smooth map $ \tilde{f}:\tilde{U} \rightarrow \tilde{V}$ satisfying  $ \psi\tilde{f}=f\phi $
\end{defn}
\begin{defn}
The isotropy subgroup of a point $\tilde{x}$ in the chart $(\tilde{U},G,\phi)$ is $G_{\tilde{x}}=\{ g \in G \mid g \tilde{x} = \tilde{x} \}$. If $\tilde{y}$
is a point in the $G$-orbit of $\tilde{x}$ , then $G_{\tilde{y}}$ is conjugate to $G_{\tilde{x}}$ in $G$.  If $G$ is Abelian ,$ G_{\tilde{x}} =$ $ G_{\tilde{y}}$ and we
denote $G_{\tilde{x}}$  by $G_x$ where $ x= \phi(\tilde{x})$.  $G_x$ is called the isotropy group of $x$.
\end{defn}
\subsection{Tangent Bundle of an Orbifold}
We  describe a  tangent bundle for an
orbifold $\bf{X}$ $=(X,\mathcal{U})$. Given a chart $(\tilde{U},G,\phi)$ we consider a tangent bundle $\mathcal{T}\tilde{U}$. Since  G acts smoothly on $\tilde{U}$, hence it  acts
smoothly on $\mathcal{T} \tilde{U} $. Indeed  if ($\tilde{u},v$) a typical element of $\mathcal{T} \tilde{U}$ then $g(\tilde{u},v)=(g\tilde{u},dg_{\tilde{u}}(v))$(here $d$ stands for
the differential). Moreover the projection 
map $\mathcal{T}\tilde{U}\rightarrow \tilde{U}$ is equivariant from which we get a projection $p :\mathcal{T}\tilde{U}/G \rightarrow U$ by using the map $\phi$.  If $ x=\phi(\tilde{x})$
\begin{equation}
    p^{-1}(x)=\{G (\tilde{x},v) \subset \mathcal{T}\tilde{U}/G \}
 \end{equation}
    It can be easily seen this fiber is homeomorphic to $ \mathcal{T}_{\tilde{x}}\tilde{U}/G_{\tilde{x}}$ where as $G_{\tilde{x}}$ is  the isotropy subgroup
    of the G action at $\tilde{x}$. This means we have constructed locally a bundle like object where the fiber is not necessarily a vector space, but
rather a quotient of the form $\RR^n/G_{0}$ where $G_{0} \subset Gl_{n}(R)$.
  To construct the tangent bundle on an orbifold  $ \tilde{X}=( X,\mathcal{U})$,we simply need to glue together the bundles defined over the
charts . Our resulting space will be an orbifold, with an atlas $\mathcal{T}\mathcal{U}$ comprising local
charts $(\mathcal{T}\tilde{U},G,\phi)$  over $ \mathcal{T}U=\mathcal{T}\tilde{U}/G$ for each $ (\tilde{U},G,\phi)\in \mathcal{U} $: We observe that the gluing maps $\lambda_{12}=\lambda_{2}{\lambda_{1}}^{-1}$  are smooth, so we can
use the transition functions $ d\lambda_{12}:\mathcal{T}\lambda_{1}(\tilde{W}) \rightarrow \mathcal{T}\lambda_{2}(\tilde{W})$ to glue $\mathcal{T}\tilde{U}/G \rightarrow U$ to $\mathcal{T}\tilde{U}/H \rightarrow H$. In other words, we define the space $\mathcal{T}X$ as an identification space $\bigsqcup_{\tilde{U} \in \upsilon}( \mathcal{T}\tilde{U}/G)/\sim)$ where we give it the minimal topology that will
make the natural maps $\mathcal{T}\tilde{U}/G \rightarrow \mathcal{T}X$ homeomorphisms onto open subsets of
$\mathcal{T}X$. We summarize this in the next remark.
\begin{remark}
The tangent bundle of an n-dimensional orbifold $\bf{X}$ denoted
by $\mathcal{T}\bf
{X} = (\mathcal{T}$$X,\mathcal{T}\mathcal{U})$has the structure of a $2n$-dimensional orbifold. Moreover,
the natural projection$ p:\mathcal{T}X \rightarrow X$ defines a smooth map of orbifolds, with
fibers $p^{-1}(x) =\mathcal{T}_{\tilde{x}}\tilde{U}/G_{\tilde{x}}$.
\end{remark}
\subsection{\bf{Example: Teardrop orbifold}}\label{tear}
A teardrop orbifold is a topological sphere  with an orbifold singularity at  north pole. Consider the standard embedding of the sphere in $\RR^3$ with$ (0,0,1)$
as the north pole and  $(0,0,-1)$as the  south pole. We give two orbifold charts. Let $U =\{(x,y,z) \in  S^2\mid z < \frac{1}{3} \}$ and $V =\{(x,y,z) \in  S^2\mid z > \frac{-1}{3} \}$ 
are two open sets around the south pole and north pole respectively. We define a  chart $(\tilde{U},G,\phi)$ where $\tilde{U}=U$, $G$ is the trivial group and $\phi$  identity
map. Around north pole we take the open set $V$
  and define an orbifold chart $(\tilde{V},H,\psi)$ where $\tilde{V}=V$, $ H=\ZZ_3$ and $\psi  $ is a triple cover branched at the north pole. Now if $ z \in U \cap V$ we take
  neighborhood so small that the maps $\phi$ and $\psi$   cover these neighborhood evenly. Let us call this neighborhood $ W_z$ and with the chart  $ (W_z,I,id)$  and the maps 
  $\lambda_1$ and
$\lambda_2$ are mere inclusions. Thus we have an orbifold structure which is different from the manifold structure of the sphere and we call it a teardrop orbifold.
From the above description it is fairly easy to construct its tangent bundle.

\begin{defn}\label{metric}
 A metric in an orbifold $(X,\mathcal{U})$ with tangent bundle$(\mathcal{T}$$X,\mathcal{T}\mathcal{U})$ is an entity which lifts to a metric in any 
 orbifold chart,invariant under local group action
 and invariant under embedding of charts. Every effective orbifold  with an orbifold structure as  defined above can be given a metric by the partition of unity.
\end{defn}
\begin{remark}\label{metric_1}
 By taking exponential map in the chart we can have charts where the local groups are subgroup of the orthogonal group.
\end{remark}

\section{{\bf Cyclic orbifold}}
\begin{defn}\label{metric}
 A  cyclic orbifold is an effective orbifold where the local groups are cyclic and  there exists an atlas where they are orientation preserving. From the above remark the local groups are 
 finite  cyclic subgroups of $SO(n)$. 
\end{defn}
\begin{remark}\label{cyc}
 All  cyclic $3$-orbifolds are topologically manifolds from the above definition. To see this  at a singular point (points where there is a non trivial stabilizer) the action fixes an axis
 and rotates transverse $2-$planes in charts. Since a $2$-plane quotients to a topological manifold under such action the topological space is a manifold. Thus for the cyclic $3$-orbifold 
 $\bf{X}$ $=(X,\mathcal{U})$ the topological space $X$ is a $3$-manifold.
 \end{remark}

 \begin{defn}\label{orient}
An oriented compact cyclic $3$-orbifold $\bf{X}$  is a compact orbifold where the underlying topological space $X$ is oriented and compact.
\end{defn}
 
 \begin{lemma}\label{link}
 The singular locus $S$ of a of a compact   cyclic $3$-orbifold  $\bf{X}$   with topological space $X$ is a link.
\end{lemma}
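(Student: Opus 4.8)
The plan is to analyze the local structure of $S$ inside a single orbifold chart and then glue the pieces together. By the definition of a cyclic orbifold together with Remark \ref{metric_1}, I may assume every chart $(\tilde{U},G,\phi)$ has $\tilde{U}\subset\RR^3$ and $G$ a finite cyclic subgroup of $SO(3)$. Fix a singular point and a chart containing a lift $\tilde{x}$ with $G\neq\{1\}$. Since $G$ is cyclic it is generated by a single rotation $g$ about a well-defined axis $\ell\subset\RR^3$, and every nontrivial power $g^m$ is again a rotation about the \emph{same} axis $\ell$; hence each nontrivial element of $G$ fixes exactly $\ell$. Consequently a point of $\tilde{U}$ has nontrivial isotropy if and only if it lies on $\ell$, so the singular set of the chart is precisely $\ell\cap\tilde{U}$. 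As $\ell$ is a straight line this is a $1$-dimensional submanifold of $\tilde{U}$ without boundary and, crucially, without branch points.

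Next I would pass to the quotient and assemble the global picture. Because every point of $\ell$ is fixed by all of $G$, the $G$-orbits meeting $\ell$ are singletons, so $\phi$ restricts to a homeomorphism of $\ell\cap\tilde{U}$ onto the singular locus $S\cap U$; thus near every singular point $S$ is a smoothly embedded $1$-dimensional submanifold of $X$ with empty boundary. The decisive use of the cyclic hypothesis enters exactly here: for a cyclic $G$ the fixed set is always a single axis, so the local models never produce the vertices or branchings that a dihedral or polyhedral group would create, and the arcs coming from overlapping charts patch together (via the chart embeddings of the compatibility condition, which are smooth and carry axis to axis) into one smooth $1$-manifold $S$ without boundary, embedded in the $3$-manifold $X$ of Remark \ref{cyc}.

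Finally I would invoke compactness. The singular locus is closed in $X$, since its complement is the set of points with trivial isotropy, which is open; and $X$ is compact by Definition \ref{orient}. Hence $S$ is a compact smooth $1$-manifold without boundary, and by the classification of compact $1$-manifolds it is a finite disjoint union of circles, each embedded and therefore a knot, so $S$ is a link. The only genuine obstacle is justifying the absence of endpoints and branch points of $S$; this is precisely what the cyclic assumption buys, since all nontrivial elements of a cyclic subgroup of $SO(3)$ rotate about one common axis.
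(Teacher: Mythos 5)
Your proof is correct and takes essentially the same route as the paper's: identify the singular set in an $SO(3)$-chart as the rotation axis of the cyclic local group (hence a $1$-manifold), then use closedness of the singular locus together with compactness of $X$ to conclude that the components are circles, i.e.\ knots. You merely spell out details the paper leaves implicit, such as the injectivity of the chart map on the axis, the absence of branch points (which is exactly what cyclicity buys over dihedral or polyhedral groups), and the classification of compact $1$-manifolds.
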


\begin{proof}
 Since around a singular point the action is that of a cyclic group of $SO(3)$ the fixed point set   of the local action is an axis which is an one manifold. 
 Since the singular locus is a closed
 set and the topological space $X$ is compact the connected  components are knots. 
\end{proof}

\begin{remark}\label{order}
The order of the local group along a connected component of the singular link does not change as we can see from above it is locally constant.
\end{remark}
 
 \subsection{{\bf Example}}
 Tear drop orbifold described in \ref{tear} is a cyclic orbifold. In three dimensions take a solid torus  lying in a  $3-$manifold and replace it by a $ Z_n$ quotient where the group acts as 
 rotation in meridian disc directions. The  resulting space has a cyclic orbifold structure.

 \section{{\bf Contact orbifold}}
\begin{defn}\label{order}
 A contact orbifold is an orbifold where there is a local group invariant contact structure in each orbifold chart and which is invariant under  embedding of charts.
\end{defn}

\section{{\bf Compact oriented cyclic  orbifolds have  contact structure}}
We prove in this section that
\begin{theorem}\label{cont}
 Any compact oriented cyclic  3-orbifold has a contact structure.
\end{theorem}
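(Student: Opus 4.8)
The plan is to transport a carefully normalized contact structure from the underlying smooth manifold onto the orbifold charts lying along the singular locus. By Remark \ref{cyc} the underlying space $X$ is a topological $3$-manifold, hence carries a unique smooth structure, and being compact and oriented it is a closed oriented smooth $3$-manifold; by Lemma \ref{link} its singular locus $S=\bigsqcup_i K_i$ is a link. The orbifold contact structure I want to produce is specified chart by chart: on the charts covering $X\setminus S$ (where the local groups are trivial) it will be an honest contact structure $\zeta_X$ on the manifold $X\setminus S$, while on each chart straddling a component $K_i$ it will be a $\ZZ_{n_i}$-invariant contact structure on the branched cover that extends smoothly across the singular core. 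The whole difficulty is to choose $\zeta_X$ near $S$ so that these two descriptions are compatible.

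First I would record the local model supplied by the classification result. Along a component $K_i$ with isotropy order $n_i$, the classification identifies a tubular neighborhood $N_i\subset X$ with a \emph{global} quotient $(S^{1}\times D^{2})/\ZZ_{n_i}$, where $\ZZ_{n_i}$ acts by rotation of the meridian disc $D^{2}$; thus the orbifold chart is $\tilde U_i=S^{1}\times D^{2}$ and the quotient map $\phi_i\colon\tilde U_i\to N_i$ is, in a complex coordinate $z$ on $D^{2}$ and $w$ on the quotient disc, given by $w=z^{n_i}$. Writing $z=re^{i\theta}$ and $w=\rho e^{i\psi}$ this reads $\rho=r^{n_i}$, $\psi=n_i\theta$. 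It is the globality of this action (no monodromy around $K_i$) that lets me put a single invariant contact form on all of $\tilde U_i$, which is why the classification is needed rather than merely the local orbifold charts.

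Next I would build $\zeta_X$. By the classical theorem of Martinet, refined through the Eliashberg extension argument, the closed oriented $3$-manifold $X$ admits a contact structure that near $S$ is transverse to the link and invariant under the meridian rotations; restricted to $X\setminus S$ it may be written near each $K_i$ as the kernel of $dt+g_i(\rho)\,d\psi$ for a radial profile $g_i$ with $g_i'>0$. The key point is that rotational symmetry leaves me free to prescribe the radial profile by a radial reparametrization of the punctured neighborhood, and I would choose
\begin{equation}
g_i(\rho)=\tfrac{1}{n_i}\,\rho^{2/n_i},
\end{equation}
which is smooth and contact on $N_i\setminus K_i$ (it fails to extend across $K_i$ only because of its fractional exponent, but $K_i$ is not part of the domain $X\setminus S$). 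Pulling this back through $\phi_i$ and using $\rho^{2/n_i}=r^{2}$, $d\psi=n_i\,d\theta$ gives
\begin{equation}
\phi_i^{*}\bigl(dt+\tfrac{1}{n_i}\rho^{2/n_i}\,d\psi\bigr)=dt+r^{2}\,d\theta,
\end{equation}
the standard transverse contact form on $\tilde U_i$, which \emph{is} smooth and contact at the core $r=0$ and is manifestly $\ZZ_{n_i}$-invariant. I would then take this as the contact structure $\tilde\zeta_i$ on the singular chart and $\zeta_X$ on the nonsingular charts; on their overlaps, which lie in $X\setminus S$, the embeddings are local sections of $\phi_i$, so $\tilde\zeta_i$ pulls back to $\zeta_X$ by construction and the compatibility required of a contact orbifold holds. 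This produces the desired contact structure on $\mathbf{X}$.

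The main obstacle is exactly the behavior at the singular axis, and it is instructive to see why the obvious attempt fails: the genuinely smooth standard transverse form $dt+\rho^{2}\,d\psi$ on $X$ pulls back to $dt+n_i r^{2n_i}\,d\theta$, which is smooth but whose $\alpha\wedge d\alpha$ vanishes along $r=0$, so it is \emph{not} contact on the chart; symmetrically, a profile making the pullback contact cannot be smooth across $K_i$ downstairs. The resolution is the deliberately non-smooth-at-the-core profile $\tfrac{1}{n_i}\rho^{2/n_i}$, which is legitimate precisely because $\zeta_X$ is only required on $X\setminus S$. The remaining work is the global input: producing a contact structure on $X$ that is transverse and rotationally symmetric near all of $S$ simultaneously (Martinet together with the extension argument over the torus-boundary complement $X\setminus\bigcup_i N_i$), and checking the cocycle compatibility across the annular overlaps, both of which I expect to be routine once the local normalization above is in place.
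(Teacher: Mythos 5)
Your proposal is correct and follows essentially the same route as the paper: both arguments rest on the classification theorem \ref{class} to make each link neighborhood a global $\ZZ_{n_i}$-quotient of $S^{1}\times D^{2}$, and on Eliashberg's extension theorem to produce a manifold contact structure that is transverse and rotationally invariant near $S$, which is then read off on the orbifold charts. The only difference is bookkeeping at the branched cover: the paper arranges the smooth coordinates on $X$ near $S$ to agree with the chart coordinates so that $dz+r^{2}d\theta$ serves verbatim on both levels, whereas you keep the model $w=z^{n_i}$ downstairs and compensate with the non-smooth radial profile $\tfrac{1}{n_i}\rho^{2/n_i}$ on $X\setminus S$ --- dual descriptions of the same contact structure.
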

To prove this we  are required to prove  a classification theorem for oriented cyclic $3$-orbifold 
\begin{theorem}\label{class}
 Every oriented $3$-cyclic orbifold can be reduced to a quotient of  local actions which are  global cyclic group actions along the neighborhoods of the singular link components
 and their actions are rotations along  discs transverse to the link with respect to a suitable metric and coordinates compatible with respect to orbifold coordinates.
\end{theorem}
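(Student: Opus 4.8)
The plan is to treat each connected component of the singular link separately and to upgrade the chart-by-chart local actions into one honest cyclic action on a solid torus. Write the singular locus as $S=K_1\cup\cdots\cup K_m$, a link by Lemma \ref{link}, and fix a component $K=K_i$. The order of the local group is locally constant along $K$, hence constant on the connected set $K$; call this common value $n$, so the isotropy group at every point of $K$ is a cyclic group $\ZZ_n$ with a generator that is determined up to the canonical identifications supplied by the chart embeddings. Choose an invariant orbifold metric (such a metric exists by a partition of unity argument), and by Remark \ref{metric_1} pass to exponential charts in which the local groups act orthogonally, i.e.\ as subgroups of $SO(3)$. By Remark \ref{cyc} the generator fixes the tangent line to $K$ and rotates the orthogonal $2$-plane, so in such a chart it acts as $(z,w)\mapsto(z,e^{2\pi i k/n}w)$, where $z$ runs along $K$, $w$ is a complex coordinate on the normal plane, and $\gcd(k,n)=1$; the weight $k$ is an integer mod $n$ that varies continuously, hence is constant along $K$.

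I would then produce the tubular neighborhood as a solid torus. The normal bundle $\nu(K)$ is a rank-$2$ real bundle over $S^1$, and it is oriented: $X$ is oriented by Definition \ref{orient}, $K$ can be oriented as a circle, and together these orient the transverse planes consistently around the loop. Oriented rank-$2$ bundles over $S^1$ have connected structure group $SO(2)$, and since $\pi_0(SO(2))=0$ every such bundle is trivial; hence $\nu(K)\cong S^1\times\RR^2$ and the metric-normal tubular neighborhood is a solid torus $N\cong S^1\times D^2$ with $K=S^1\times\{0\}$. I would arrange this diffeomorphism using the orbifold metric so that in each normal disc the induced inner product and orientation are the standard ones.

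With such coordinates $(\theta,w)$ fixed, I would define a single global action of $\ZZ_n$ on $S^1\times D^2$ by letting the generator send $(\theta,w)\mapsto(\theta,e^{2\pi i k/n}w)$. Because the trivialization matches the fiberwise metric and orientation with the standard disc, this global map agrees in each normal fiber with the orthogonal local action of the generator, so the $\ZZ_n$ it generates acts by rotations on the discs transverse to $K$, exactly as in the solid-torus example given earlier for cyclic orbifolds. The quotient orbifold $[(S^1\times D^2)/\ZZ_n]$ then carries precisely the charts produced by the exponential construction, so it is isomorphic to the restriction of $\bf{X}$ to $N$; away from $S$ the local groups are trivial and $X$ is already a manifold, so no further identification is needed there. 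It remains to check that the coordinates $(\theta,w)$ are compatible with the given orbifold atlas, which amounts to verifying that the transition maps between the exponential charts and the global coordinates obey the orbifold compatibility conditions.

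The step I expect to be the main obstacle is precisely this globalization: a priori the orthogonal local actions are defined only on overlapping charts and are glued only through the embeddings of the atlas, and one must exclude any monodromy that would obstruct their assembly into one $\ZZ_n$-action on $S^1\times D^2$. The two possible failures are a nontrivial twist of the normal bundle and a reversal of the sense of rotation as one traverses $K$; both are ruled out by the orientability of $X$ together with the connectedness of $SO(2)$, which is exactly what forces $\nu(K)$ to be trivial as an oriented $SO(2)$-bundle. The remaining delicate point is to confirm that the quotient chart structure is genuinely equivalent as an orbifold atlas to the original one near $K$, and not merely homeomorphic on underlying spaces.
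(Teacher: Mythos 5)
Your strategy is the same as the paper's: pass to exponential charts where the local groups lie in $SO(3)$, assemble the link-transverse planes into an oriented rank-two bundle over the circle, invoke triviality of oriented plane bundles over $S^1$ to get a solid-torus neighborhood, and define a single global $\ZZ_n$-action by disc rotations, with orientability excluding a reversal of the rotation sense. However, there are two genuine gaps, and they sit exactly where the paper does its real work. The first is your assertion that the normal bundle $\nu(K)$ simply exists as a rank-two vector bundle over $S^1$. In the orbifold, the fiber of the tangent object over a singular point is the cone $\RR^2/\ZZ_n$ (inside $\RR^3/\ZZ_n$), not a vector space, so there is no ready-made normal bundle of $K$; and you cannot borrow the normal bundle from a smooth structure on $X$ near $K$, because a smooth structure compatible with the orbifold charts there is precisely what the theorem is constructing --- doing so would be circular. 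One must glue the \emph{un-quotiented} transverse planes of the charts via differentials of chart embeddings, and such embeddings are only defined up to composition with local group elements; the paper adjusts the embeddings (composing with suitable rotations) so that the resulting transition maps are consistent and define the bundle $B$, and only then argues orientability and triviality. Without this step your $\nu(K)$ is not yet a defined object.

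The second gap is one you flag yourself: the compatibility of the solid-torus coordinates $(\theta,w)$ with the orbifold atlas is left as ``remains to check.'' This is not a routine deferral, since the compatibility clause is part of the statement being proved and is what the subsequent contactization rests on: without it, the model form $dz+r^{2}d\theta$ lifts to a singular form in orbifold coordinates, as the paper's remark after Theorem \ref{cont} points out. The paper secures compatibility through Remark \ref{trans}: because the charts come from the exponential map, chart embeddings are linear along the transverse planes, so the bundle transition functions \emph{are} the orbifold transition functions near the link, and a tube-lemma argument then yields a tubular neighborhood whose bundle coordinates are simultaneously orbifold coordinates (and glue with the manifold structure on the complement of the link). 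Incorporating these two arguments would close the proposal; as written, it reproduces the paper's outline but omits the two steps that carry its burden of proof.
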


\begin{proof}
 Take a compact oriented cyclic orbifold $\bf{X}$ with a topological space $X$  and singular link $S$. Take a point $x$ on the singular link. Take an orbifold chart  
 $\mathcal{U}=(\tilde{U_x},G,\phi)$ where $G$ is a finite cyclic subgroup of $SO_3$ and $\phi((\tilde{U_x})$ is an open set in $X$ around the point $x$. The $G$ action fixes  ${\phi}^{-1}(S)$
 and preserves a 2-plane  in tangent space of ${\phi}^{-1}(x)$ (which is a vector space and covers the orbifold tangent space of $x$ which is not a vector space)transverse to the vector along the lift of the singular link. If $x$ lies in the overlap of image of two charts 
  $\mathcal{U}=(\tilde{U_x},G,\phi)$
 and $\mathcal{V}=(\tilde{V_x},H,\psi)$ the tangent space of ${\phi}^{-1}(x)$ and ${\psi}^{-1}(x)$ are connected by $d(\lambda_2 {\lambda_1}^{-1})$ where
 $\lambda _1:\tilde{W_x} \rightarrow \tilde{U_x}$ and  $\lambda _2:\tilde{W_x} \rightarrow \tilde{V_x}$ are embeddings of an  another  chart  $\tilde{W_x}$ around $x$ to the above two charts
 and $d$ means the differential ( for definition of embedding of charts please refer to \ref{orb}. It should also be noted that these embeddings induce isomorphism of local groups since the 
 order of each
 group involved is same as the order of singularity along the link component is same). It can be seen from definition of  embeddings the transition function 
 $d({\lambda_2} {\lambda_1}^{-1})$ maps
 the direction along the singular link $S$ in one chart to the direction of the singular link $S$ in the other chart and the  $G$ invariant plane to the $H$ invariant plane. 
 If we take any other 
 embedding $\lambda_3$ and$\lambda_4$
  the two differentials  $d(\lambda_2 {\lambda_1}^{-1})$ and $d(\lambda_4 {\lambda_3}^{-1})$ will differ by a composition of $2k\pi/n$ rotation ( they are smooth hence differ by a group element
  and $n$ is the order of this connected 
 component
 of
 the link). So changing $\lambda_4$ suitably (i.e multiplying by a $2k\pi/n$ rotation or an element of the local group) we get   embeddings which give rise to transition functions of these link-transverse planes. Thus we can glue trivialization of $2$-planes
 into a bundle $B$ around the connected component of the singular link.
 \begin{remark}\label{trans}
  Since  there exists charts around  each point  in the  singular link component  which is a replica of part of the tangent space of  a point lying in the link component (since we are taking 
  exponential maps to 
  construct charts),  
  embeddings   are linear  maps in some coordinates   thus their differentials coincide with   actual maps 
   along the above  link 
  transverse planes. Thus the bundle transition functions are actually transition functions of the orbifold in the transverse plane direction near the link points.
   
 \end{remark}

    Since   around  each point on the singular link the total space of the above bundle $B$ gets coordinates from  a neighborhood  chart we get 
    coordinates of the the total space around
    the point that  are  compatible with local charts of the orbifold. Now covering the connected component of the link by such neighborhoods  in the total space of 
    $B$ by tube lemma like arguments
     we will
    get a tubular neighborhood of the  connected component of the link such that  the bundle coordinates  are  compatible with the coordinates of the orbifold  $\bf{X}$. Now since
    the coordinates of the orbifold structure induces a smooth manifold structure on the complement of the link $S$ and creating similar tubular neighborhoods 
    around other components of
    the link we get a smooth manifold structure by gluing the tubular neighborhoods with the complement of the link by pasting points away from the link.(the gluing is possible
    because the 
    chart branched cover and the covered 
    space near the link component 
    are  homeomorphic around the points of the link, and diffeomorphic away from the link component( both  homeomorphic to a  $D^{2}$ (disc) bundle over $S^{1}$ because action of cyclic groups
by rotation    on meridian discs quotients to discs)).
      
      Since the topological space resulting from gluing  is  homeomorphic to the $X$ and since $X$ is an oriented manifold the bundle $B$ is an oriented bundle.
      Thus $B$ can be trivialized. Take an  element $g$ of a local group and  such that  it acts as a $2\pi/n$ rotation in the respective chart  and since 
      \begin{equation}\label{sec}
      d(\lambda_2 {\lambda_1}^{-1})dg =d(\eta(g))d(\lambda_2 {\lambda_1}^{-1})
     \end{equation}
     (where $\eta$ is a  group isomorphism)  and since the bundle $B$ is orientable $d(\eta(g))$ acts as a $2\pi/n$  rotation on  the link-transverse planes on the tangent bundle of the
     other chart. To see the above let $v$ belong to a 
     fiber in a trivialization of $B$
      corresponding to the first chart. Then $d(g)v$ and $v$ 
      form
      a $2\pi/n$ sector in the fiber. Then $d(\lambda_2 {\lambda_1}^{-1})dgv$ and $d(\lambda_2 {\lambda_1}^{-1})v$ will  form a $2k\pi/n$ sector by the above equation \ref{sec}. Since the 
      transition map
      is one-one and $\eta(g)$ is group homomorphism we have
      $k=1$. So $d(\eta(g))$ will be a $2\pi/n$ or a
      $-2\pi/n$ rotation, but since the bundle is orientable $\eta(g)$ is a $2\pi/n$ rotation in the other chart. So
       the action of the local groups can be made global on 
      the tubular neighborhood around the connected component of the link. Giving a metric  by gluing the standard metric in the charts in the tubular neighborhood, 
      which is invariant 
      under the  action of this group 
      we get $\partial{r}$ and $\partial{\theta}$ coordinates where action of the the group elements are  just translations by constants  in the theta direction.
       \end{proof}
       Now we  give the proof of theorem \ref{cont}
       \begin{proof}
       We define a contact structure around
      the tubular neighborhood( derived in theorem \ref{class}) given by $dz + r^{2}d(\theta)$ where  $z$ direction is the link direction.
      
         Now do the same thing for all components of the link.  Taking  a disjoint ball    pull back on it  the standard overtwisted structure with the standard overtwisted
         disc lying in it. We can extend these $2$-plane fields(the local contact structure around link components and the disjoint ball) to  global $2$-plane fields and   we can get a 
         contact structure homotopic to the global plane field which extends the contact structure around the link  on the whole manifold  by using Eliashberg's extension
         theorem (see  theorem 3.1.1  \cite{[YE]}) by taking $A$ as the link ,$L$ as the empty set and  $K$ as the one point set in the statement of the theorem. This structure is a 
         contact structure in the orbifold coordinates (see theorem \ref{class})around the link (since the tubular neighborhood coordinates  are compatible with orbifold
         coordinates)  and is invariant under the action of the local groups which are global along these neighborhoods of the link component (this follows from the previous theorem).
         Since away 
         from the link
           the manifold and orbifold coordinates are compatible we have a contact structure on the whole orbifold $\bf{X}$.
         \end{proof}

\begin{remark}
 The need for a neighborhood around the link component with compatible orbifold co-ordinates  which extends to smooth manifold coordinate system is necessary because
 otherwise the structure $dz + r^{2}d(\theta)$ would have lifted to a singular form in orbifold coordinates($dz +r^{2/n}d(\theta/n))$, if we just go by definition without 
 the above adjustment. This happens because the orbifold
 co-ordinates are branched cover over  the transverse discs of  actual manifold co-ordinates(the branched cover along the transverse discs
 can be best described by the map $w \rightarrow w^{n} $, hence the singular form) 
\end{remark}
\begin{corollary}
 The orbifold contact structure induces a contact structure in the manifold.
\end{corollary}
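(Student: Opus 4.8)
The plan is to read the contact structure produced in Theorem~\ref{cont} in the two natural pieces of $X$ and to check the contact condition against the smooth structure of the manifold rather than against the branched orbifold charts. First I would use $X \setminus S$, where $S$ is the singular link: here every local group is trivial, so the orbifold charts already are smooth charts of the manifold $X$ and the orbifold contact distribution is literally a smooth maximally non-integrable $2$-plane field, with nothing to prove.

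The work is concentrated in a tubular neighborhood $N$ of a component of $S$. By Theorem~\ref{class} there are coordinates $(z,r,\theta)$ on $N$ that are at once compatible with the orbifold charts and with the smooth manifold structure of $X$, in which the local group acts by the rotations $\theta \mapsto \theta + 2k\pi/n$ and in which the contact structure of Theorem~\ref{cont} is $\ker(dz + r^{2}\,d\theta)$. The $1$-form $dz + r^{2}\,d\theta$ involves only the rotation-invariant data $r$, $d\theta$, $dz$, so it is invariant under the local group and hence is a well-defined $1$-form $\alpha$ on $N \subset X$. I would then pass to the smooth Cartesian coordinates $x = r\cos\theta$, $y = r\sin\theta$, in which $r^{2}\,d\theta = x\,dy - y\,dx$ and $r\,dr\wedge d\theta = dx\wedge dy$. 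Thus $\alpha = dz + x\,dy - y\,dx$ is a genuinely smooth, nowhere-vanishing $1$-form on $N$, and $\alpha \wedge d\alpha = 2\,dz\wedge dx\wedge dy$ is a nowhere-vanishing top form, including along the core $r = 0$. Hence $\alpha$ is a smooth contact form for the smooth structure of $X$ on all of $N$, the singular link included.

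To finish I would glue: the global $2$-plane field of Theorem~\ref{cont} agrees with $\ker\alpha$ on each such $N$ and is already a contact structure on all of $X$, so the local descriptions patch to a single smooth contact distribution on the manifold $X$.

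The step I expect to be the main obstacle --- indeed the only real content --- is the behavior at $r = 0$. Naively transporting $dz + r^{2}\,d\theta$ through the branched cover $w \mapsto w^{n}$ produces the non-smooth expression $dz + r^{2/n}\,d(\theta/n)$ flagged in the Remark just above, which is not a form on $X$ at all. The coordinate normalization of Theorem~\ref{class} is exactly what lets me write the form in the downstairs smooth coordinates from the outset, so that the vanishing factor $r$ in $d\alpha$ is precisely the Jacobian factor absorbed by $r\,dr\wedge d\theta = dx\wedge dy$; once that is arranged, smoothness, local-group invariance, and non-degeneracy across the core are all immediate.
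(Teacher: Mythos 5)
Your proposal is correct and takes essentially the same route as the paper: both rest on the fact (from Theorem~\ref{class}) that the tubular-neighborhood coordinates near the link are simultaneously orbifold-compatible and smooth manifold coordinates, so that the rotation-invariant form $dz + r^{2}d\theta$ is read directly as a smooth contact form on $X$ rather than transported through the branched cover $w \mapsto w^{n}$. Your explicit Cartesian verification at $r=0$ and the gluing with the trivial region away from $S$ merely spell out details that the paper's one-sentence proof leaves implicit.
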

\begin{proof}
 Since the orbifold co-ordinates around the link components can be extended to a smooth manifold coordinate system the orbifold contact structure is a manifold contact structure transverse and
 rotationally invariant near the link components.
\end{proof}

\begin{remark}
  The oriented condition is necessary for the space to have  a contact structure since  from the  above corollary it is clear that  an orbifold contact structure on 
  these orbifolds induces a  contact structure on the manifold structure thus making it an oriented manifold.
\end{remark}

\begin{remark}
  There are cyclic orbifolds whose topological space is non-oriented. For example take a non-oriented manifold. Take a small Euclidean  open neighborhood and take a smaller solid torus. Remove the interior
  and replace it with a solid torus quotient of  a  $Z_n$ action  by rotation on meridian discs. This is a cyclic orbifold with a non-oriented manifold as a topological space.
\end{remark}

\section{{\bf Overtwisted and tight structures}}
\begin{defn}
 An overtwisted contact structure in an oriented compact contact cyclic $3$-orbifold is a contact structure which  lifts to an overtwisted 
 structure in the corresponding manifold structure. It is a tight structure if it is not overtwisted.
\end{defn}

\begin{defn}
 A Lutz twist in an oriented compact contact cyclic $3$-orbifold  for knots disjoint from the singular link $S$ and knots lying in the singular link is a Lutz twist on  the contact
 structure induced in the manifold. Since the twist parameters are rotationally invariant, on   the  orbifold contact structure given by our method the result of a Lutz twist is again another 
 contact structure in orbifold level. 
  In the same way we can define
 full Lutz twist.
\end{defn}
\begin{theorem}
 Any contact structure induced by our method is homotopic to an overtwisted contact structure with the overtwisted disc intersecting the singular link $S$.
\end{theorem}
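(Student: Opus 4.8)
The plan is to realize the overtwisted structure not along an auxiliary transverse knot, but along a component of the singular link $S$ itself, so that the resulting overtwisted disc is forced to meet $S$. By Theorem \ref{class} and Theorem \ref{cont}, on a tubular neighborhood $S^1 \times D^2$ of a chosen component of $S$ the contact structure produced by our method is the kernel of $dz + r^2\,d\theta$, where $z$ is the link direction and $(r,\theta)$ are the rotationally symmetric transverse coordinates in which the local group acts by $\theta \mapsto \theta + 2\pi/n$. This is exactly the standard transverse model in which a Lutz twist is defined, with the component of $S$ playing the role of the transverse knot; moreover the core circle $r=0$ of the solid torus is precisely that component of the singular link.

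First I would perform a full Lutz twist on this neighborhood, replacing the structure by $\ker\!\left(h_1(r)\,dz + h_2(r)\,d\theta\right)$ with $h_1,h_2$ as in the full-Lutz conditions. Since these functions depend only on $r$, the twisted structure is invariant under the rotation $\theta \mapsto \theta + 2\pi/n$, and hence, as already observed in our definition of the Lutz twist for cyclic orbifolds, it descends to a contact structure on $\mathbf{X}$ agreeing with the original one outside the solid torus. I would then invoke the full Lutz twist lemma (Lemma 4.5.3 of \cite{[HG]}), which guarantees that a full Lutz twist does not change the homotopy class of the $2$-plane field; therefore the new orbifold contact structure is homotopic to the one we started with.

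Finally I would exhibit the overtwisted disc and locate it relative to $S$. The full Lutz twist produces the disc $\phi(D^2_{r_0})$, the image of $(r,\theta)\mapsto (z(r),r,\theta)$, where $r_0$ is the smaller radius with $h_2(r_0)=0$ and $z$ satisfies $z(r_0)=0$, $z(r)>0$ for $0\le r< r_0$, and $z'(r)=0$ only at $r=0$. The boundary $\phi(\partial D^2_{r_0})$ is Legendrian with non-twisting contact and surface framings exactly as in \cite{[HG]} section 4.5, so this is an overtwisted disc, and its unique interior singular point is the apex $(z(0),0,\cdot)$ sitting at $r=0$. Since $r=0$ is the core circle, i.e. the chosen component of the singular link, the overtwisted disc meets $S$ precisely at its interior singular point, which proves the claim.

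The main obstacle is the last step: one must track the geometry of the Lutz disc carefully enough to confirm that its singular/apex point genuinely lies at $r=0$ and hence on $S$, rather than merely near it. The rotational symmetry of the whole construction is what makes this automatic, since the disc is built symmetrically about the core $r=0$ and its single interior singularity is pinned there; the Legendrian-boundary and non-twisting-framing verifications are identical to the manifold case and require no new input, so the only essentially new content is this placement of the disc relative to the singular locus.
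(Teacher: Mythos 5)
Your proposal is correct and follows essentially the same route as the paper: a full Lutz twist performed along a component of the singular link, rotational invariance of the twisted structure so that it descends to the orbifold, Lemma 4.5.3 of \cite{[HG]} for the homotopy, and the observation that the resulting Lutz disc meets the core circle of the solid torus. The one point the paper makes explicit that you should add is that the homotopy furnished by Lemma 4.5.3 is itself rotation-invariant, so it is a homotopy at the orbifold level and not merely of plane fields on the underlying manifold; conversely, your careful placement of the disc's interior singular point at the apex on $r=0$ is a welcome sharpening of the paper's terser statement that the overtwisted disc intersects the knot around which the twist is taken.
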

\begin{proof}
Inspecting Lemma 4.5.3 in \cite{[HG]} the homotopy between a contact structure and the structure resulting from a full  Lutz twist is rotation  invariant. So around a knot in the  
singular link our contact structure can be changed to an overtwisted contact structure
by a homotopy in the orbifold level. Since an overtwisted disc intersects the knot around which the twist is taken  some overtwisted disc will intersect the knot in the singular link.
\end{proof}
 
 We end with a question.
\subsection{Question}
Can Eliashberg's type classification  is possible   for overtwisted structures in compact oriented cyclic 3-orbifolds(For statement of classification see \cite{[YE]})?


\begin{thebibliography}{[Dieu]}



\bibitem{[ALR]} A. Adem, J. Leida and Y. Ruan: Orbifolds and stringy topology,  Cambridge
 Tracts in Mathematics, {\bf 171}, Cambridge University Press, Cambridge, 2007.

 \bibitem{[DH]} Daniel Herr Open books in contact 3  orbifolds Dissertations and Theses 
University of Massachusetts - Amherst ScholarWorks@UMass Amherst
 
\bibitem{[HG]}  Hansjorg Geiges   An introduction to Contact topology, Cambridge studies in Advanced mathematics.
 
 \bibitem{[YE]} Y.Eliashberg  Classification of overtwisted contact structures on 3-manifolds, Inventiones mathematicae  98, 623-637(1989)
 
 
 \end{thebibliography}
\end{document}